\newcommand{\Z}{\mathbb Z}
\newcommand{\R}{\mathbb R}
\newcommand{\F}{\mathcal F}
\newtheorem{theorem}{Theorem}
\newtheorem{corollary}{Corollary}
\newtheorem{lemma}{Lemma}
\newtheorem{proposition}{Proposition}
\newtheorem{definition}{Definition}
\newtheorem{remark}{Remark}
\author{Toby Sanders and Rodrigo B. Platte \\ \small{School of Mathematical and Statistical Sciences, Arizona State University, Tempe, AZ, USA.} }
\date{}
\title{Multiscale Higher Order TV Operators for $\ell_1$ Regularization}
\begin{document}
\maketitle

\begin{abstract}
In the realm of signal and image denoising and reconstruction, $\ell_1$ regularization techniques have generated a great deal of attention with a multitude of variants. A key component for their success is that under certain assumptions, the solution of minimum $\ell_1$ norm is a good approximation to the solution of minimum $\ell_0$ norm. In this work, we demonstrate that this approximation can result in artifacts that are inconsistent with desired sparsity promoting $\ell_0$ properties, resulting in subpar results in {some} instances.  With this as our motivation, we develop a multiscale higher order total variation (MHOTV) approach, which we show is related to the use of multiscale Daubechies wavelets.  We also develop the tools necessary for MHOTV computations to be performed efficiently, via operator decomposition and alternatively converting the problem into Fourier space.  The relationship of higher order regularization methods with wavelets, which we believe has generally gone unrecognized, is shown to hold in several numerical results, although notable improvements are seen with our approach over both wavelets and classical HOTV.
\end{abstract}

\section{Introduction}  
Over the past couple of decades, $\ell_1$ regularization techniques such as total variation have become increasingly popular methods for image and signal denoising and reconstruction problems.  Along with TV \cite{ROF}, a large variety of approaches for similar $\ell_1$ regularization approaches have been proposed for an array of problems.  {Signal and image recovery methods continue to attract a great deal of interest due to the wide variety of potential applications and ever increasing means of various sensing mechanisms to acquire data.} To name a few, synthetic aperture radar {(SAR)} \cite{wei2010sparse,bhattacharya2007fast}, {magnetic resonance imaging (MRI) \cite{lustig2007sparse,1257394,4587391},} electron tomography\cite{Leary,SGP-ET}, and inpainting \cite{sanders2017subsampling,king2013image} are all image recovery {applications} that have advanced in part due to $\ell_1$ regularization methods, and in each case the approach can be tailored to the challenges that the particular application poses. With many problems such as MRI and electron tomography, the challenge is often to acquire as little data as necessary due to possible damage of the subject being imaged or because of time constraints, driving the need for inverse methods that can achieve the absolute best results from very limited and noisy data.

The mathematical description of the general problem we are interested in is to recover a signal or image $f\in \R^N$, from noisy measurements $b$ of the form $b = Af+\epsilon$, where $A\in \R^{m \times N}$ is some sensing matrix that approximates the physical model of the particular problem.  Then the $\ell_1$ regularized solution is given by
\begin{equation}\label{gen-l1}
f_{rec} = \arg \min_f \Big\{ \| Af - b \|_2^2 + \lambda \| T f \|_1 \Big\} ,
\end{equation}
where $T$ is some sparsifying linear transform and $\lambda$ is a parameter that balances the effects of the data and regularization terms.  The appropriateness of this approach is that some prior knowledge of the signal suggests that $Tf$ is sparse, and that the formulation with the $\ell_1$ norm encourages such sparsity \cite{eldar2012compressed,CSincoherence,candes2006robust}.  In many applications, some knowledge of the appropriate transform is available, particularly with images and for other signals, this knowledge is in the form of some ``smoothness.''  

In the case of TV, the sparsifying transform is given by $T : \R^N \rightarrow \R^{N-1}$, where $(Tf)_i = f_{i+1} - f_i$.  The general idea for this approach is that the signal $f$ is assumed to be piecewise constant with a few discontinuities, in which case $Tf$ is sparse.  If this is not precisely true, this approach still effectively reduces unwanted oscillations at the cost of the well documented stair-casing effect \cite{HOTV,blomgren1997total}.  However, for more {general piecewise smooth functions} higher order TV (HOTV) regularization methods are effective \cite{HOTV,TGV,hu2012higher}, and they do not suffer from the stair-casing effects.  In this case the transform maps $f$ to approximations of discrete derivatives of $f$, e.g. higher order finite differences of $f$.

Another popular choice for $T$ are wavelet transforms \cite{starck2010sparse,mallat2008wavelet,lustig2007sparse}.  For instance, such a transform can be written as $T : \R^N \rightarrow \R^{N}$, where $(Tf)_j = \langle f , \psi_j \rangle$ and $\psi_j$  are orthonormal so that $f = \sum_j \langle f , \psi_j \rangle \psi_j$.  The idea here is that for appropriately smooth signals, most of the signal's energy is captured in the few low frequency, larger scaled elements of the basis.  Thus most of the coefficients can be neglected, and thus a sparse approximation of $f$ exists with respect to the basis.

\subsection{Discussion and Contribution}

The crux of general $\ell_1$ regularization methods is that recovering a signal with the most sparse representation, that is recovering the solution with the smallest so called $\ell_0$ norm, is often equivalent to its convexly relaxed variant of recovering the signal with the smallest $\ell_1$ norm, which is a field of study called compressed sensing (CS) \cite{eldar2012compressed,CSincoherence,candes2006robust}.  Although convex $\ell_1$ optimization algorithms are useful in promoting sparsity, some small nonzero coefficients may still persist, an obvious sign that the assumptions needed for the exactness guarantees given by CS theory sometimes do not hold in practice.  This observation is largely the original motivation our present work in developing a multiscale HOTV approach related to multiscale wavelet regularization.

Much work has been devoted to understanding and developing sparsity promoting regularization methods, which are related to our current work.  Numerous variants of higher order TV methods have been proposed \cite{HOTV,Archibald2015,hu2012higher}.  For example, in \cite{Archibald2015} the authors propose an edge detection operator that annihilates polynomials, which leads them to operators close to finite difference matrices. In \cite{HOTV} a combination of a TV regularizer with a quadratic second order regularizer is developed in the continuous domain to eliminate staircasing effects.  Likewise, several authors have shown that using some combination of first and second order methods to be beneficial \cite{VOTV,TGV,setzer2011infimal,chambolle1997image}.  Unfortunately, since there are multiple regularization terms these methods typically introduce additional parameters that need to be tuned.  In terms of theory, it has been well documented that under certain conditions TV and HOTV are equivalent to reconstruction with splines \cite{unser2017splines,steidl2006splines}, i.e. the solution of such methods recovers a piecewise polynomial with a sparse set of jumps.  

TV denoising in particular has several very interesting equivalences.  It is well known that TV denoising and other more general first order denoising methods are equivalent to smoothing with a certain nonlinear diffusion models\cite{scherzer2000relations}, a typical result of writing the equivalent Euler-Lagrange equations.  Perhaps discussed less frequently and most related to the observations in our current work, TV denoising is equivalent to soft threshold denoising with the highest frequency basis elements of the Haar wavelets \cite{steidl2002relations,steidl2004equivalence}, in particular with the so called cycle spinning \cite{kamilov2012wavelet}.  In general however, the main difference between these methods is that with TV the smoothness analysis is limited to the finest scales, whereas wavelet regularizations promote function smoothness at multiple scales.  A main contribution of this article is to expand further on the relationship between wavelets in $\ell_1$ regularization and those $\ell_1$ methods related to HOTV.  In regards to extension of wavelets, a number of multidimensional generalizations have been invented including curvelets and shearlets  \cite{guo2007optimally,kutyniok2012shearlets,starck2010sparse}, which are primarily used for sparse function approximation and improve the approximation rates in two and three-dimensions compared with their one-dimensional counterparts.


The method we develop here an alternative for HOTV regularization which we refer to as multiscale HOTV (MHOTV).  In contrast to previous work, our approach considers combining both a multiscale approach and higher order TV methods for the class of image reconstruction problems.  The motivation for such an approach is in observable sub par results due to the relaxation of the sparsity promotion through the $\ell_1$ norm, contrary to the aforementioned results with splines \cite{unser2017splines,steidl2006splines}.  In light of this, we determined this calls for analysis of the function behavior at multiple scales.  As can be deduced, this multiscale {strategy} is similar to the treatment of wavelets, and we argue that our approach is indeed related to the use of Daubechies wavelets, with the main divergence coming in the orthogonality and/or frame conditions prescribed by the wavelets.  {Orthogonality} may be unnecessary for general $\ell_1$ regularization techniques, although fundamental to thresholding denoising techniques, and the relaxation of this condition in our approach allows for better localization of the {transform}.  In the development of MHOTV, we carefully address the computational concerns associated with our approach through the use of both the FFT and operator decompositions.  We are able to show through several numerical examples that MHOTV provides a notable improvement to the current alternatives.

The organization of the remainder of the article is as follows.  In section \ref{sec3} we define the HOTV operators and the corresponding multiscale generalizations.  We also motivate the approach via a numerical example, and make the connection with Daubechies wavelets.  In section \ref{sec4} we precisely define the MHOTV $\ell_1$ regularization model and give precise normalizations to deal with proper parameter selection.  In section \ref{sec5} we address the computational concerns associated with calculating MHOTV coefficients, devising two distinct ways that they can be calculated in an efficient manner.  In section \ref{sec6} we provide numerical results for 1-D and 2-D reconstruction problems, showing that MHOTV is an improvement to the original HOTV and the related Daubechies wavelets.  Some proofs and definitions are provided in the appendix.

\section{HOTV and Multiscale Generalizations}\label{sec3}
As an alternative to TV regularization, {general} order TV methods have been shown to be effective for $\ell_1$ regularization \cite{HOTV,TGV,SGP-ET,Archibald2015}.  The TV transform can be thought of as a finite difference approximation of the first derivative, thus annihilating a function in locations where the function is a constant, i.e. a polynomial of degree zero.  Likewise, higher order TV transforms can be considered higher order finite difference approximations to higher order derivatives, thus annihilating {{the corresponding} degree polynomials.  With this in mind, we have the following definition:

\begin{definition}[Finite Differences]
Let $\phi_k \in \R^N$ be defined by
\begin{equation}
(\phi_k)_m = \begin{cases}
(-1)^{k} & \mbox{if } \, m=0\\
0 & \mbox{if  } \, 1\le m<N-k \\
(-1)^{k+m+N} {k \choose N-m} & \mbox{if }\, N-k \le m < N
\end{cases}.  
\end{equation}
Then for $f\in\R^N$, the periodic $k^{th}$ order finite difference of $f$ is given by 
$$
f*\phi_k,
$$
where $*$ denotes the discrete convolution.
\end{definition}

\begin{remark}
The convolution in this definition (and in general) can be represented by multiplication with a circulant matrix $\Phi_k$, where each row of $\Phi_k$ holds a shifted version of $\phi_k$.  An example of the matrix in the case $k=2$ is given in (\ref{Phi_2}).  Note that this definition uses a periodic extension of $f$ and can be ignored by dropping the last $k$ rows of $\Phi_k$.
\end{remark}
\begin{equation}\label{Phi_2}
\Phi_2 = \left( \begin{array}{c c c c c c}
               1 & -2 & 1 & 0 &  \dots & 0\\
               0 & 1 & -2 & 1 & \dots & 0 \\
               0 & 0 & 1 & -2 & \dots & 0 \\
               \vdots &   & \ddots  &  & \dots & \vdots \\
               1 & 0 & \dots & & 1 & -2\\
               -2 & 1 & \dots & & 0 & 1
              \end{array}\right).
\end{equation}

With this definition, the HOTV model can be said to recover
\begin{equation}\label{HOTV-model}
 f_{rec} = \arg \min_f \Big\{\|Af - b\|_2^2 + \lambda \|\Phi_k f\|_1 \Big\}.
\end{equation}

Unfortunately for many real world imaging problems the equivalence between $\ell_1$ and $\ell_0$ may not hold in practice, yet the $\ell_1$ regularization still tends to encourage favorable solutions.  In terms of the sparsity promoting transform, this means that the transform of the recovered function may not be truly sparse, but most of the values are instead relatively close to zero.  For HOTV, this means that a local Taylor expansion of the recovered function will still contain some small nonzero higher order coefficients, yet essentially unobservable at the very local scale.  In other words, at some point $t$, there exists a polynomial expansion of minimal degree of $f$ given by 
\begin{equation}\label{taylor}
 f(x) {\approx} \sum_{m=0}^M \alpha_m(t) \frac{(x-t)^m}{m!} ,
\end{equation}
which holds for all $x$ within some small interval $I$ around the point $t$.  Ideally a solution given by the order $k$ HOTV model recovers a solution so that the coefficients $\alpha_m(t)$ vanish for $m\ge k$.  The $\ell_1$ model allows for these coefficients to remain, although very small, and the function still \emph{appears} to essentially be a polynomial of degree less than $k$.
However, when this behavior persists over many points at a larger scale, the result can be a function which looks more like a trigonometric polynomial rather than an algebraic one.  

This phenomenon is demonstrated in Figure \ref{fig1}, where a piecewise polynomial of degree two was reconstructed from random noisy samples with 50\% sampling\footnote{The number of samples is half the number of grid points.} using TV and HOTV regularizations.  The sampling matrix $A\in \R^{N/2 \times N}$ is constructed so that a random 10\% of its entries are set to be nonzero, where these nonzero values are uniformly distributed over $[0,1]$.  The samples were corrupted with normaly distributed mean zero noise.  Two different grid sizes are demonstrated, 256 and 1024, and it can be observed that these small oscillations become increasingly abundant with more grid points.  However, in the bottom of the figure, the third order finite difference of the HOTV3 solution plotted in logarithmic scale shows that locally this oscillatory behavior {results in almost exact low order polynomials, although \emph{very} small amplitudes persist in the transformed domain and} thus not truly sparse in the $\ell_0$ sense.  Nevertheless, all regularization approaches should still be deemed useful, as evidenced by the least squares solution shown.

\begin{figure}
 \centering
 \includegraphics[width=0.5\textwidth]{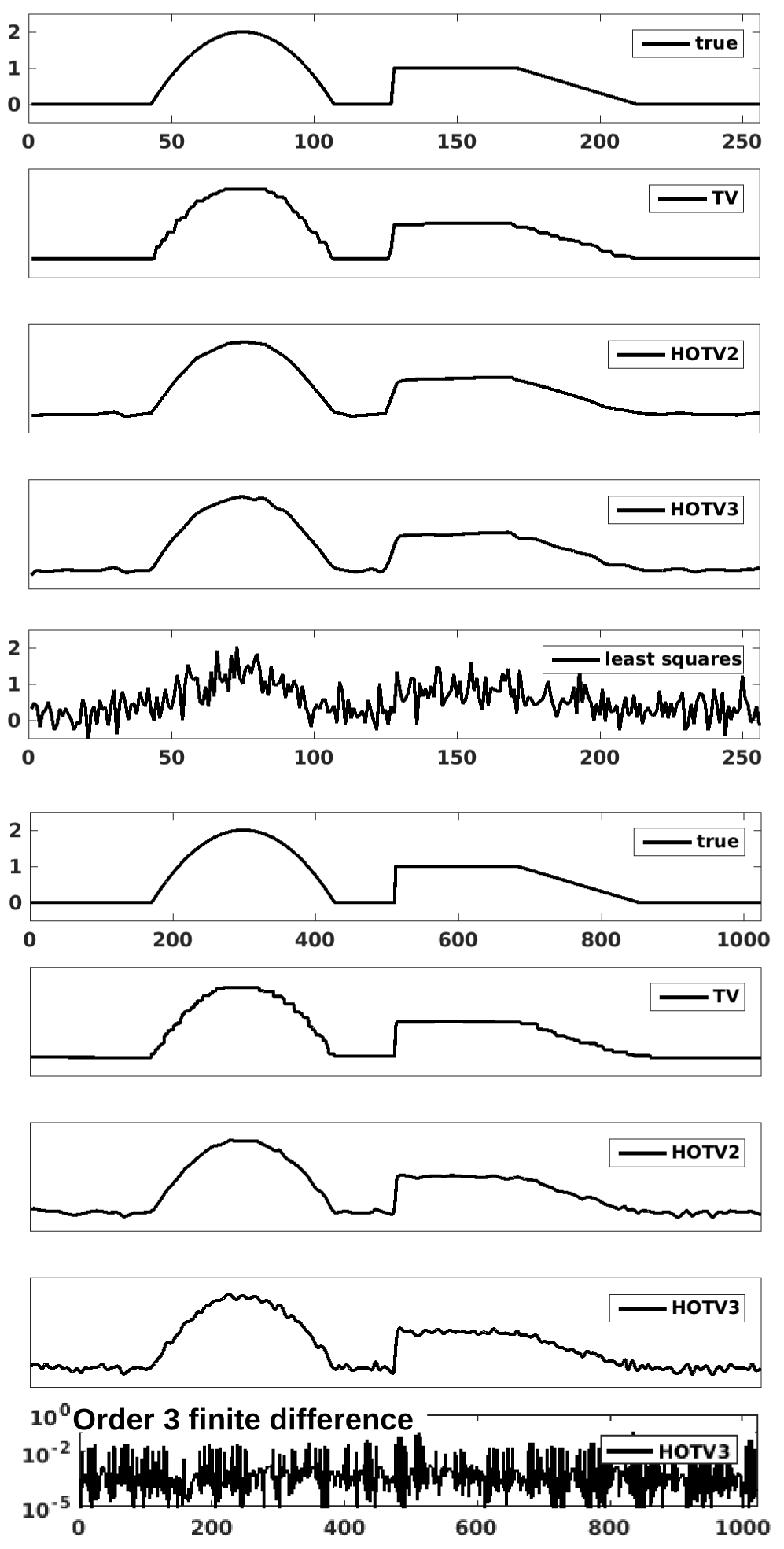}
 \caption{Rows 2-4 and 7-9 reconstruction of a piecewise polynomial function of degree two shown in the top row over 256 (top 5 plots) and 1024 (bottom 5 plots) {points} from random sampling at 50\%.  {The corresponding least squares solution is shown in the fifth plot, and the 3rd order finite difference of the HOTV3 solution over the 1024 grid is shown on the bottom.}}
 \label{fig1}
\end{figure}

Due to this phenomena we propose a multiscale HOTV approach, which considers the regularization transform at multiple scales.  The idea is that a larger stencil would penalize these oscillations even with the $\ell_1$ norm.  As TV generalizes to the Haar wavelet by stretching and scaling of the elements, we propose the same with HOTV.  {To this end we} give the following definition.

\begin{definition}[Multiscale Finite Differences]
Let $\phi_{k,j} \in \R^N$ be defined by 
\begin{align}
& (\phi_{k,j})_m =\label{MHOFD-vect} \\ 
& \begin{cases}
(-1)^{k} & \mbox{if } \, m=0\\
0 & \mbox{if }  \, 1 \le m \le  N-j(k+1)\\
(-1)^{k+\lfloor \frac{N-m}{j} \rfloor} {k \choose \lfloor \frac{N-m}{j} \rfloor}
& \mbox{if } \, N-j(k+1) < m < N 
\end{cases}. \nonumber
\end{align}
Then for $f\in \R^N$, the periodic $k^{th}$ order finite difference of scale $j$ of $f$ is given by 
$$
f*\phi_{k,j},
$$
where $*$ denotes the discrete convolution.
\end{definition}

\begin{remark}
Again, this convolution can be represented as multiplication with a circulant matrix $\Phi_{k,j}$.  An example of $\Phi_{k,j}$ in the case $k=2$ and $j=2$ is given in (\ref{Phi_22}).
\end{remark}
\begin{equation}\label{Phi_22}
\Phi_{2,2} = \left( \begin{array}{c c c c c c c c c}
               1 & 1 & -2 & -2 & 1 & 1 & 0 & \dots & 0\\
               0 & 1 & 1 & -2 & -2 & 1 & 1 & \dots & 0 \\
               0 & 0 & 1 & 1 & -2 & -2 & 1 & \dots & 0 \\
               \vdots &   &  & \ddots &  && &\dots &  \vdots \\
               -2 & -2 & 1 & 1 & 0& 0 &  \dots & 1 & 1\\
               1 & -2 &-2 & 1 & 1 & 0 & \dots & 0 & 1
              \end{array}\right).
\end{equation}

\subsection{MHOTV Reconstruction Model}\label{sec4}
We now present the general model for MHOTV reconstruction.  Generally speaking, we still use the model presented in (\ref{gen-l1}), where $A$ maps the unknown function $f$ to some perhaps noisy measurements given by $b$, from which we use to reconstruct $f$.  Our sparsity promoting transforms are now given by the matrices $\Phi_{k,2^j}$, for $j=0,1,\dots, \ell$, where $\ell$ is the maximum scaling of the operator used and $k$ is the chosen order.  Setting our maximum scaling to $\ell=0$ corresponds to the traditional HOTV approach.  Although not completely necessary, we choose a dyadic scaling of the operators, similar to the treatment of wavelets.  As with wavelets, we will show that this is convenient for computational purposes.  Finally then our reconstruction model is given by
\begin{equation}\label{MHOTV-model}
 f_{rec} = \arg \min_f \Big\{ \| A f - b\|_2^2 + \frac{\lambda}{\ell+1} \sum_{j=0}^\ell 2^{-(j+k-1)} \| \Phi_{k,2^j} f \|_1\Big\}.
\end{equation}
The factor of $2^{-j}$ is a normalization that accounts for the increasing norms of each operator, which would otherwise weigh too heavily to the largest scaling operator \footnote{This is akin to the dyadic scaling of the wavelet basis elements after the dyadic stretching.}.  The scaling of the parameter $\lambda$ by $\ell+1$ simplifies the selection of the parameter, which would otherwise need to be manually scaled by such a factor to account for the number of scales being used. By similar reasoning, the additional scaling by $2^{-k+1}$ is used to account for the order $k$ of the method \cite{sanders2018parameter}.

\subsection{Relationship to Daubechies Wavelets}

Wavelets can be characterized as an orthonormal basis that is generated through a multiresolutional analysis \cite{daubechies1992ten,mallat2008wavelet}.  The multiresolutional analysis leads to the shifting and dyadic stretching and scaling of a single generating mother wavelet, analogous to our treatment of MHOTV by shifting and stretching of a single row or element of the matrices $\Phi_k$.
From this very general characterization, there are a number of parameters in the design of the wavelets.  {For Daubechies wavelets} the smoothness is characterized by the number of vanishing moments, i.e. the number of polynomial orders to which the wavelet is orthogonal.  A wavelet with $k$ vanishing moments acts as a multiscale differential operator of order $k$.  As a trade off, an increasing number of vanishing moments chosen for the wavelet basis results in an increase in the support of the wavelet functions, and Daubechies wavelets are designed to yield the orthonormal wavelet basis of minimal support given a selected number of vanishing moments \cite{mallat2008wavelet}.

To develop a basic mathematical description of a wavelet expansion, suppose we want to represent a \emph{uniform pixelated} function with $2^n$ pixels on $[0,1]$ in terms of the wavelet basis.
Then denoting our scaling function and mother wavelet with $k$ vanishing moments by $\varphi_k$ and $\psi_k$ respectively, we have the following orthonormal wavelet representation
\begin{equation}\label{wave-eq}
f = \sum_{t=0}^{2^{\ell} - 1} \langle f , \varphi_{k,\ell,t} \rangle \varphi_{k,\ell,t} 
	+ \sum_{j=\ell}^{n-1} \sum_{t=0}^{2^{j}-1} \langle f , \psi_{k,j,t} \rangle \psi_{k,j,t} .
\end{equation}
Here, 
$
\psi_{k,j,t}(x) = 2^{j/2} \psi_k \left(2^j x - t \right)$
and similarly for $\varphi_{k,j,t}$,
i.e. shrinking and scaling of the of the generating wavelet functions.  The parameter $\ell$ is a positive integer with $0\le \ell \le n$, and the value
$n-\ell$ is said to be the number of \emph{scales} in the wavelet expansion \footnote{For $\ell=n$ it is understood that the second sum is removed.}.
With the representation in (\ref{wave-eq}), the coefficients for the scaling functions in the first sum capture most of the energy of the signal, and the wavelet coefficients $c_{k,j,t} = \langle f,\psi_{k,j,t} \rangle$ \emph{vanish} for values of $t$ where $f$ is a polynomial of degree $k-1$ over the support of $\psi_{k,j,t}$.  {For $\ell_1$ regularization, we only need to be concerned with regularization of the wavelet coefficients in (\ref{wave-eq}), and thus the coefficients for the scaling functions in the first sum are not included in the regularization.  
}

\begin{figure}
\includegraphics[width=.5\textwidth]{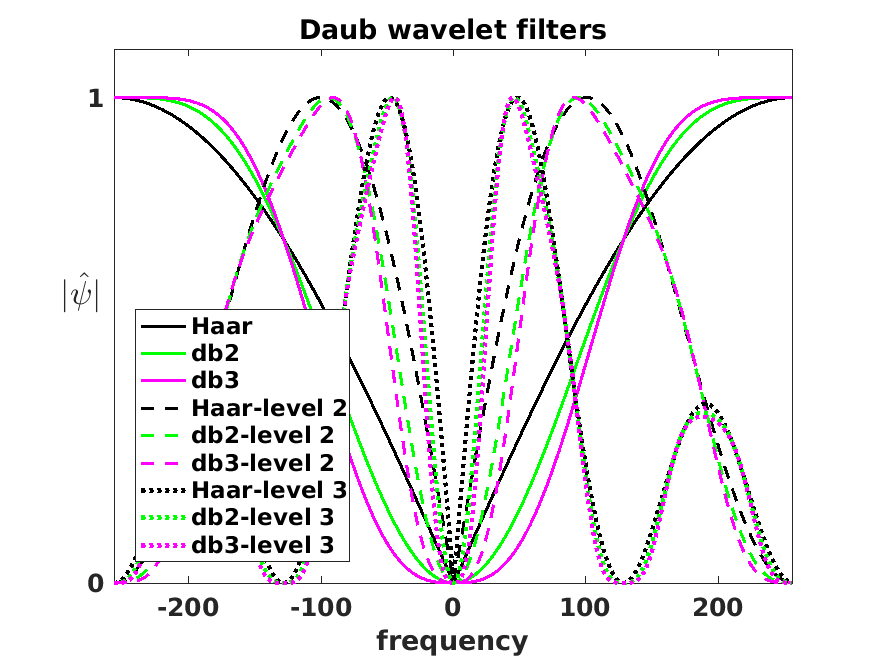}
\includegraphics[width=.5\textwidth]{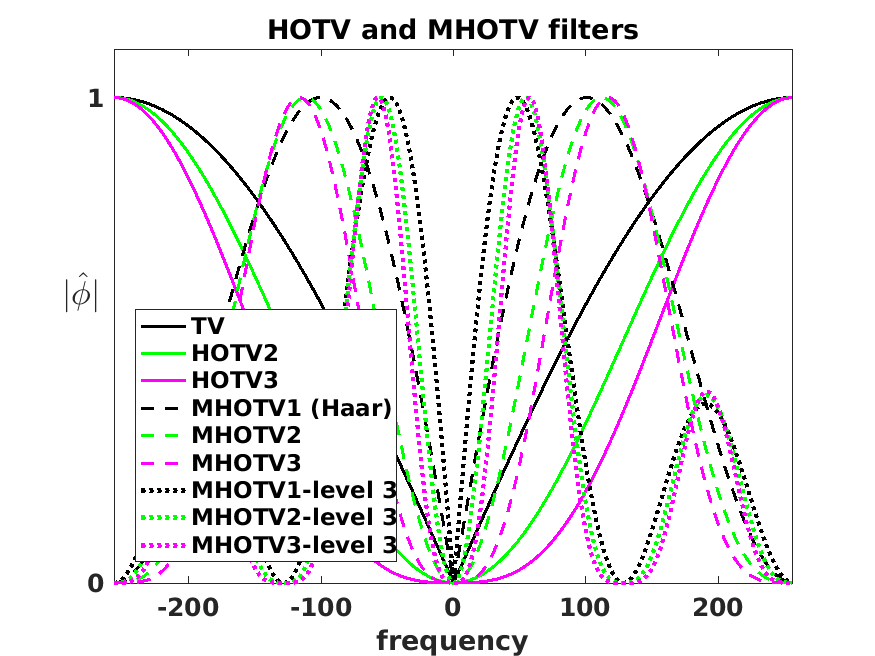}
\caption{The filters in Fourier space of wavelet and MHOTV convolution functions.}
\label{fig: filters}
\end{figure}

Connecting  these ideas to HOTV, we see that these {transforms} are playing similar roles.  Both are prescribed by the number of vanishing moments, or in the language of HOTV, the highest order polynomial that is annihilated by the approach.  Furthermore, both are designed to yield minimal support given the number of vanishing moments.  The crucial difference lies in the orthogonality condition prescribed by wavelets, which further increases the support of the wavelet elements.  We emphasize again, that this condition is fundamental to compression and threshold denoising methods, but not necessarily useful with general image reconstruction problems.

Finally, one additional technique utilized for $\ell_1$ regularization and denoising as well is the use a wavelet frame by taking all possible shifts for each scaling of the wavelets, which is sometimes referred to as translational invariant cycle spinning \cite{spinning,temizel2005wavelet,kamilov2012wavelet}.  This eliminates the lack of translation invariance of a wavelet basis that can otherwise result in unwanted artifacts near discontinuities.  With this in mind, we may define the wavelet frame elements by
$$
\tilde \psi_{k,j,t}(x)  = 2^{j/2} \psi_k \left( 2^j (x - t2^{-n})\right), \quad t = 0 , 1 , \dots, 2^{n-1}.
$$
Then the \emph{averaged} wavelet frame representation of $f$ may be written as
\begin{align*}\label{wave-eq-frame}
f & = \sum_{t=0}^{2^{\ell} - 1} \langle f , \varphi_{k,\ell,t} \rangle \varphi_{k,\ell,t} 
	+ \sum_{j=\ell}^{n-1}2^{j-n} \sum_{t=0}^{2^{n}-1} \langle f , \tilde \psi_{k,j,t} \rangle \tilde \psi_{k,j,t} \\
	& = \sum_{t=0}^{2^{\ell} - 1} \langle f , \varphi_{k,\ell,t} \rangle \varphi_{k,\ell,t} 
	+ \sum_{j= \ell}^{n-1} 2^{j-n} \Psi_{k,j}^T (f * \psi_{k,j,0}(-x)),
\end{align*}
where $\Psi_{k,j}^T = (\tilde \psi_{k,j,0}, \tilde \psi_{k,j,1}, \dots , \tilde \psi_{k,j,2^n - 1})$.  Hence a wavelet approach promotes sparsity with respect to the vectors $f*\psi_{k,j,0}$, or equivalently with respect to $\Psi_{k,j}f $.  Then a regularization norm in this setting takes the form
\begin{equation}\label{wave-reg-norm}
\sum_{j=\ell}^{n-1} \| \Psi_{k,j} f \|_1,
\end{equation}
which is analogous to our regularization norm in (\ref{MHOTV-model}).  For wavelets, the scalings are inherent to function definitions, and the dyadic stretching of the elements is indicated by $j$ as opposed to $2^j$.

The case when $\ell=n-1$ would be most closely related to the original HOTV, and for smaller values of $\ell$ the wavelets are more comparable to the MHOTV development in this article.} 

Since computing both MHOTV operators and wavelets coefficients are convolutional operations, we may visualize their corresponding filters in Fourier space, providing us another basis for comparison, which we have done in Figure \ref{fig: filters}.  Each of these can be interpreted as high pass filters, where the higher levels pass increasingly lower frequencies.  A very close similarity of the wavelet filters and MHOTV filters can be observed in Figure \ref{fig: filters}, providing a strong visual confirmation to our preceding discussion on the close relationship between the two.

\section{Fast Calculation of MHOTV Operators} \label{sec5}
Calculation of traditional HOTV coefficients is a computationally inexpensive task, due to the sparsity of the matrix operator.  However, with increasing dyadic scales the direct calculation increases exponentially.  Due to this, in the proceeding section we develop two distinct approaches that show that these calculations can be carried out with linear increase in the flop count with respect to the number of scales used.

Fast computation of standard HOTV can be done in several ways.  One can construct the sparse matrix $\Phi_k$ and perform matrix computations directly, a calculation with runtime of $kN$ flops.  One could make use of other {procedures}, such as MATLAB's ``diff" command which  {requires the same} flop {count} without storing the matrix.  With MHOTV, these approaches become less appealing.  With matrix construction, if one is using several scales, then several matrices need to be computed and stored, and the matrices become significantly less sparse for larger scales.  The``diff" command cannot be implemented directly for larger scale HOTV operators.  

Another alternative is to use the Fourier convolution theorem to perform the convolution {operation} via a product in Fourier space.  For the traditional HOTV operators, this can be fairly slow compared with the matrix and ``diff" approach, since the necessary two discrete Fourier transforms would require $\sim 2 N\log_2 N$ flops compared with the $kN$ flops for the alternative {implementations}.  However, this {method} is relatively comparable for MHOTV, since the Fourier transforms only need to be computed once to determine the coefficients at all scales.

We outline two {procedures} for efficient calculation of MHOTV.  First, we describe the Fourier approach, where we derive precise formulas for the MHOTV Fourier filters.  Second, we describe an alternative efficient approach by decomposition of the MHOTV matrix operators.

\subsection{Computation via Fourier Transforms}
By the Fourier convolution theorem, the MHOTV operators can be computed {as multiplications} in Fourier space, i.e.
\begin{equation}
 f*\phi_{k,j} = F^{-1} \left( F(f)\cdot F(\phi_{k,j})\right) ,
\end{equation}
where $F$ denotes the discrete Fourier transform.  Although this can be numerically computed, it is a convenient to have an exact formula for the discrete Fourier transform of $\phi_{k,j}$.  Moreover, analytic determination of $F(\phi_{k,j})$ allows us to generalize the MHOTV to fractional orders.
\begin{proposition}
The DFT of the vector $\phi_{k,j}$ defined in (\ref{MHOFD-vect}) has an explicit expression given by
\begin{equation}\label{eq: DFTeq}
 F( \phi_{k,j} )_\xi = \frac{\left( e^{ \frac{i2\pi\xi j}{N}} - 1\right)^{k+1}}{e^{\frac{i2\pi\xi}{N}}-1},
\end{equation}
for $\xi = 0, 1, \dots, N-1$.
\end{proposition}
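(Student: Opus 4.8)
The plan is to compute the discrete Fourier transform of $\phi_{k,j}$ directly from the piecewise definition (\ref{MHOFD-vect}), exploiting the fact that the floor function appearing there partitions the support of $\phi_{k,j}$ into $k+1$ consecutive blocks of length $j$.

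First I would fix the frequency $\xi$, write $w = e^{i2\pi\xi/N}$, and adopt the convention $F(v)_\xi = \sum_{m=0}^{N-1} v_m e^{-i2\pi\xi m/N}$. By (\ref{MHOFD-vect}) the only nonzero entries of $\phi_{k,j}$ sit at $m=0$ and in the range $N - j(k+1) < m < N$. On that range I would substitute $n = N - m$, so that $n$ runs over $\{1,\dots,j(k+1)-1\}$ and, since $w^N = 1$, the Fourier weight becomes $e^{-i2\pi\xi m/N} = w^{n}$; the isolated entry at $m=0$, equal to $(-1)^k$, is exactly the $n=0$ value of the same expression. This collapses the transform to the single finite sum
\[
F(\phi_{k,j})_\xi = \sum_{n=0}^{j(k+1)-1}(-1)^{\,k+\lfloor n/j\rfloor}\binom{k}{\lfloor n/j\rfloor}w^{\,n}.
\]

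Next I would split the sum according to the value $\ell=\lfloor n/j\rfloor$, which runs over $\{0,1,\dots,k\}$, writing $n = j\ell + r$ with $0\le r\le j-1$. This factors the expression into a binomial sum times a geometric sum,
\[
F(\phi_{k,j})_\xi = \Bigl(\sum_{\ell=0}^{k}\binom{k}{\ell}(w^j)^\ell(-1)^{k-\ell}\Bigr)\Bigl(\sum_{r=0}^{j-1}w^{r}\Bigr) = (w^j-1)^k\cdot\frac{w^j-1}{w-1},
\]
the first factor by the binomial theorem and the second by summing the geometric series (valid for $w\neq 1$), which is precisely (\ref{eq: DFTeq}). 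The degenerate case $\xi=0$ I would treat separately, checking directly that $\sum_m(\phi_{k,j})_m = j(1-1)^k$ — that is $0$ for $k\ge 1$ and $j$ for $k=0$ — which matches the value of the right-hand side of (\ref{eq: DFTeq}) at $w=1$ (a removable singularity when $k\ge 1$).

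I do not expect a genuine obstacle: the delicate part is purely the index bookkeeping — correctly matching the three cases of (\ref{MHOFD-vect}), in particular the floor-function coefficient and the lone $m=0$ entry, to the partition of the index set into blocks of length $j$, and remembering to handle the $\xi=0$ frequency on its own. Once the reindexing is set up, the rest is just the binomial theorem and a finite geometric series.
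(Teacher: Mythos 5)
Your proof is correct and follows essentially the same route as the paper's: reindex via $n=N-m$, group the support of $\phi_{k,j}$ into $k+1$ blocks of length $j$, and finish with the binomial theorem and a finite geometric series. The only (welcome) addition is your explicit treatment of the removable singularity at $\xi=0$, which the paper's proof leaves implicit.
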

\begin{proof}
The expression for the $\xi^{th}$ Fourier coefficient in the DFT of $\phi_{k,j}$ is given by
\begin{equation}\label{DFT}
F( \phi_{k,j} )_\xi = \sum_{m=0}^{N-1} (\phi_{k,j})_m e^{\frac{-i2\pi\xi}{N}m}.
\end{equation}
Notice that the terms $1\le m \le N - j(k+1)$ vanish by definition of $\phi_{k,j}$.  For the latter terms, we make the substitution $n=N-m$ and flip the sum to give the expression
\begin{equation}
 F(\phi_{k,j})_\xi = \sum_{n=0}^{j(k+1)-1} (-1)^{k + \lfloor{\frac{n}{j}}\rfloor} {k \choose \lfloor \frac{n}{j} \rfloor } e^{\frac{-i2\pi\xi}{N}(N-n)},
\end{equation}
where the term $n=0$ corresponds to $j=0$ and the following indices $n=1,2,\dots, m(k+1)-1$, correpsond to $j=N-1,N-2, \dots, N-m(k+1)+1$, respectively.
Notice that we may drop the $N$ in the numerator of the exponential and that the values of $\phi_{k,j}$ repeat over strings of length $j$.  Therefore each of these corresponding strings of exponential terms in (\ref{DFT}) get the same weights, leading to the following sum:
\begin{equation}
 F( \phi_{k,j} )_\xi
   = \sum_{m=0}^k \left( (-1)^{m+k}{k\choose m} \left[ \sum_{\ell=0}^{j-1} e^{\frac{i2\pi\xi}{N}(jm+\ell)} \right] \right) .
\end{equation}
Here the inner sum represents the $j$ consecutive terms in (\ref{DFT}) that receive the same weights from $\phi_{k,j}$, namely $(-1)^{m+k}{k\choose m}$.  Switching the order of summation, we recognize the sum over $m$ as a binomial expansion leading to
\begin{align*}
 F( \phi_{k,j} )_\xi  &= \sum_{\ell=0}^{j-1} \sum_{m=0}^k (-1)^{m+k}{k\choose m} e^{\frac{i2\pi\xi}{N}(jm+\ell)}\\
 & =  \sum_{\ell=0}^{j-1} \left( e^{\frac{i2\pi\xi}{N}j} - 1 \right)^k e^{\frac{i2\pi\xi}{N}\ell}.
\end{align*}
The remainder of the proof follows by elementary calculations.
\end{proof}

\subsection{Fast Computation via Operator Decomposition}
In this section, we give a decomposition for the matrix operator $\Phi_{k,2^j}$ and describe how this decomposition can be used for rapid calculation of MHOTV operators.  The decomposition of $\Phi_{k,2^j}$ is given in the following theorem.

\begin{theorem}\label{thm1}
 Let the matrix $P_j$ with entries $\{p_{m,n}\}_{m,n=1}^N $ be defined by 
 \begin{equation}\label{pvalues}
 p_{m,n} = \begin{cases}
            1 & \mbox{if } m=n\\
            1 & \mbox{if } n= (m+j-1)\bmod{(N)} + 1 \\
            0 & \mbox{if } otherwise
           \end{cases}.
 \end{equation}
Then the following holds:
\begin{enumerate}
\item The entries of $P_j^{k+1}$, which we denote by $\{p_{m,n}(j,k)\}_{m,n=1}^N$, are given by
$$
p_{m,n}(j,k) = \begin{cases}
              {k+1 \choose \ell} &\mbox{if } n = (m+j\ell-1)\bmod({N)}+1\\
              0 &\mbox{if } otherwise
             \end{cases},
$$
where it is implied $\ell$ is an integer satisfying $0\le \ell \le k+1$.
\item  $\Phi_{k,2^j}$ has the decomposition 
\begin{equation}\label{decomp2}
 \Phi_{k,2^j} = P_j^{k+1} P_{j-1}^{k+1} \cdots P_1^{k+1} \Phi_k 
\end{equation}
and therefore
\begin{equation}\label{decomp3}
 \Phi_{k,2^j} = P_j^{k+1} \Phi_{k,2^{j-1}}.
\end{equation}

\item The equality in (\ref{decomp2}) holds for any rearrangement of the product of matrices.
\end{enumerate}
\end{theorem}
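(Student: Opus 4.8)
The plan is to work at the level of circulant matrices, since convolution is multiplication by a circulant and circulants commute. First I would identify $P_j$ as the circulant matrix corresponding to the vector $\delta_0 + \delta_{-j}$ (entries $1$ at shift $0$ and at shift $j$, indices mod $N$), i.e. the convolution operator $f \mapsto f + S^j f$ where $S$ is the cyclic shift. For part (1), $P_j^{k+1}$ then corresponds to convolving with $(\delta_0 + \delta_{-j})^{*(k+1)}$, and expanding by the binomial theorem gives $\sum_{\ell=0}^{k+1} \binom{k+1}{\ell} \delta_{-j\ell}$; translating back to matrix entries yields exactly the stated formula, with the shifts $j\ell$ reduced mod $N$ and the understanding that coefficients at coinciding shifts add (which does not happen here for the relevant range unless wrap-around collisions occur, a harmless edge case since the claimed formula already has the mod-$N$ built in). This is essentially a one-line binomial expansion once the dictionary "$P_j = I + S^j$" is in place.

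For part (2), the cleanest route is again via Fourier symbols, using the formula from the preceding Proposition. The symbol of $\Phi_k = \Phi_{k,1}$ at frequency $\xi$ is $(\omega^{\xi}-1)^{k+1}/(\omega^{\xi}-1) = (\omega^\xi - 1)^k$ where $\omega = e^{i2\pi/N}$ — wait, more precisely $F(\phi_{k,1})_\xi = (\omega^{\xi}-1)^{k+1}/(\omega^\xi - 1) = (\omega^\xi-1)^k$ — and the symbol of $P_j$ is $1 + \omega^{-j\xi}$, hmm, one must be careful with the sign convention in the DFT; I would instead just note that $P_j = I + S^j$ has symbol $1 + z^j$ where $z = \omega^{\pm\xi}$ according to the convention, and that multiplying symbols,
\[
\text{symbol}\big(P_j^{k+1} P_{j-1}^{k+1}\cdots P_1^{k+1} \Phi_k\big) = \prod_{i=1}^{j}(1+z^{2^{i-1}})^{k+1}\cdot(z-1)^k.
\]
wait — the scales are $P_1, \dots, P_j$ but the claim is about $\Phi_{k,2^j}$, so the operators being composed use shift parameters $1, 2, \dots, j$? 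Let me re-read: the decomposition is $\Phi_{k,2^j} = P_j^{k+1}P_{j-1}^{k+1}\cdots P_1^{k+1}\Phi_k$, so the subscripts on $P$ run $1,\dots,j$ and $P_i = I + S^i$. Then the product of symbols is $(z-1)^k \prod_{i=1}^j (1+z^i)^{k+1}$, and I must check this equals the symbol of $\Phi_{k,2^j}$, which by the telescoping identity $\prod_{i=1}^{j}(1+z^i) \ne (1 - z^{2^j})/(1-z)$ in general — hmm. The correct telescoping is $(1+z)(1+z^2)(1+z^4)\cdots(1+z^{2^{j-1}}) = (1-z^{2^j})/(1-z)$, which uses shift parameters $1,2,4,\dots,2^{j-1}$, not $1,2,3,\dots,j$. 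So I should double-check whether the intended indices are $P_1, P_2, P_4, \dots, P_{2^{j-1}}$; assuming the paper means the dyadic sequence (consistent with $\Phi_{k,2^j}$ and with (\ref{decomp3}) reading $\Phi_{k,2^j} = P_{2^{j-1}}^{k+1}\Phi_{k,2^{j-1}}$ up to notation), the symbol identity becomes
\[
(z-1)^k \prod_{i=0}^{j-1}\big(1+z^{2^i}\big)^{k+1} = (z-1)^k \left(\frac{z^{2^j}-1}{z-1}\right)^{k+1} = \frac{(z^{2^j}-1)^{k+1}}{z-1},
\]
which is exactly $F(\phi_{k,2^j})_\xi$ from the Proposition (with $z = e^{i2\pi\xi/N}$). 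Since a circulant is determined by its symbol, equality of all $N$ symbols gives equality of the matrices, proving (2); and (\ref{decomp3}) is immediate by peeling off the last factor. Part (3) is then trivial: all the matrices in the product are circulant (polynomials in the single shift $S$), hence they pairwise commute, so the product is independent of the order of the factors.

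The main obstacle is bookkeeping rather than mathematics: pinning down the exact indexing convention for the $P$'s (dyadic vs.\ consecutive), getting the DFT sign convention consistent between the Proposition and the $P_j = I + S^j$ identification, and confirming the telescoping product $\prod_{i=0}^{j-1}(1+z^{2^i}) = (z^{2^j}-1)/(z-1)$ — a standard identity, but one must be sure it is the dyadic shifts that appear. An alternative, purely combinatorial proof of (2) avoids Fourier entirely: using part (1) repeatedly, $P_{2^{j-1}}^{k+1}\cdots P_1^{k+1}$ is the circulant with vector $\bigstar_{i=0}^{j-1}\big(\sum_{\ell=0}^{k+1}\binom{k+1}{\ell}\delta_{-2^i\ell}\big)$, and one checks directly that convolving this with $\phi_k$ (or equivalently noting each binary string of length $2^j$ decomposes uniquely via its base-2 digits) reproduces the coefficients $(-1)^{k+\lfloor (N-m)/2^j\rfloor}\binom{k}{\lfloor(N-m)/2^j\rfloor}$ in the definition of $\phi_{k,2^j}$; I would present the Fourier argument as primary and mention this combinatorial check as corroboration.
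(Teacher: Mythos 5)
Your proof is correct and takes a genuinely different route from the paper's. The paper proves part (1) by induction on $k$, observing that multiplying by another factor of $P_1$ just adds neighboring entries of $P_1^{k+1}$; it proves the base case of part (2) from a Vandermonde-like coefficient identity (Lemma \ref{lem1}) obtained by expanding $(1-x)^k(1+x)^{k+1}=(1-x^2)^k(1+x)$ in two ways, and then inducts over the scale; part (3) is obtained exactly as you obtain it, from commutativity of convolution operators. You instead work entirely with DFT symbols: $P_i=I+S^i$ has symbol $1+z^i$, $\Phi_k$ has symbol $(z-1)^k$, and the dyadic telescoping identity $\prod_{i=0}^{j-1}(1+z^{2^i})=(z^{2^j}-1)/(z-1)$ reproduces the symbol $(z^{2^j}-1)^{k+1}/(z-1)$ of $\Phi_{k,2^j}$ furnished by the Proposition; since a circulant is determined by its symbol, the matrices agree. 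This buys a shorter argument that reuses a result already proved in the paper, and it makes transparent why the factors commute; the paper's Lemma \ref{lem1} is really the same generating-function fact read off coefficient by coefficient, so the two proofs are cousins rather than strangers. You were also right to be suspicious of the indexing: with the literal definition $P_i=I+S^i$, the product $P_j^{k+1}\cdots P_1^{k+1}\Phi_k$ has symbol $(z-1)^k\prod_{i=1}^{j}(1+z^i)^{k+1}$, which fails to match $\Phi_{k,2^j}$ once $j\ge 3$ (e.g. $(1+z)(1+z^2)(1+z^3)\ne 1+z+\cdots+z^7$); the factors must carry the dyadic shifts $1,2,4,\dots,2^{j-1}$, which is also what the paper's own inductive step $\Phi_{k,2^{m+1}}=P_{m+1}^{k+1}\Phi_{k,2^m}$ implicitly requires. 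Your binomial expansion for part (1) (with the harmless no-wrap-around caveat $j(k+1)<N$) and the commutativity argument for part (3) are both fine.
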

{The proof of this theorem is given in the appendix. The matrices $P_2$ and $P_2^2$ are shown below to illustrate the sparse structure of these operators:}
\begin{align*}\label{pmatrices}
 P_2 &= \left( \begin{array}{cccccc}
        1 & 0 & 1 & 0 & \dots & 0\\
        0 & 1 & 0 & 1 & \dots & 0\\
        \vdots & & \ddots & & \dots &  \vdots\\
        0 & 1 & 0 & \dots & & 1
       \end{array}\right) , \\
P_2^2 &= \left( \begin{array}{ccccccc}
                1 & 0 & 2 & 0 & 1 & \dots & 0\\
                0 & 1 & 0 & 2 & 0 & \dots & 0\\
                \vdots & & \ddots & & & \dots &  \vdots\\
                0 & 2 & 0 & 1 & 0 & \dots & 1
               \end{array}\right).       
\end{align*}

\begin{proposition}\label{prop2}
 Direct calculation of $\Phi_{k,2^j}$ requires $2^j Nk$ flops.  The same calculation using the decomposition in (\ref{decomp2}) requires $j N(k+1) + Nk$ flops.  The same calculation using the Fourier method requires $ 2Nlog_2 N + N$.
\end{proposition}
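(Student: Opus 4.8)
The plan is to treat the three cost estimates separately, in each case isolating the arithmetic that actually depends on the input vector $f\in\R^N$ and counting it under the convention already used for the claim that $\Phi_k$ costs $kN$ flops: one scalar add, subtract, or multiply-accumulate carried out over a length-$N$ array counts as $N$ flops. With this convention all three estimates must reduce to $kN$ at the trivial scale $j=0$, which is a useful consistency check.

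For the direct estimate I would argue purely from the structure of the generating vector. By (\ref{MHOFD-vect}), $\phi_{k,2^j}$ is nonzero only at $m=0$ and for $N-2^j(k+1)<m<N$, and on the latter range the entry depends only on $\lfloor (N-m)/2^j\rfloor\in\{0,1,\dots,k\}$; hence its support consists of $k+1$ consecutive blocks of length $2^j$ carrying the weights $\pm\binom{k}{\ell}$, exactly as displayed in (\ref{Phi_22}). Viewing $\Phi_{k,2^j}$ as the circulant matrix generated by this vector, the product $\Phi_{k,2^j}f$ is a binomially weighted combination of $k+1$ shifts of the length-$2^j$ running sum of $f$; organizing it so that the unit-weight block plays the role of the ``seed'' of the accumulator --- exactly as the $\binom{k}{0}$ term does in the $kN$ count for $\Phi_k$ --- leaves $k$ length-$2^j$ blocks to be accumulated, i.e. $\sim 2^j N k$ flops. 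This is the step where the bookkeeping is most delicate: the order-of-magnitude count $(k+1)2^jN$ is immediate, but pinning down the stated leading constant requires committing to this particular accounting of the unit-coefficient block, and this is the only place in the proposition where the estimate is not exact to the flop.

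For the decomposition estimate I would evaluate (\ref{decomp2}) from right to left applied to $f$. First $\Phi_k f$ costs $kN$ flops. Then for each $i=1,\dots,j$ one applies $P_i^{k+1}$, and since by (\ref{pvalues}) $P_i$ has exactly two unit entries per row, a single application of $P_i$ is one array addition, i.e. $N$ flops; hence $P_i^{k+1}$ costs $(k+1)N$ flops. (Forming $P_i^{k+1}$ explicitly via part 1 of Theorem \ref{thm1} is no cheaper, as it has $k+2$ nonzeros per row.) Summing over $i$ and adding the $kN$ for $\Phi_k$ gives $jN(k+1)+Nk$; by part 3 of Theorem \ref{thm1} any reordering of the factors yields the same total.

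For the Fourier estimate I would combine the convolution theorem with the closed form (\ref{eq: DFTeq}): since $F(\phi_{k,2^j})$ is known analytically it is precomputed once --- an $O(N)$ cost independent of $f$, and the same filter serves every right-hand side --- so computing $\Phi_{k,2^j}f = F^{-1}\big(F(f)\cdot F(\phi_{k,2^j})\big)$ requires only one length-$N$ FFT for $F(f)$, one pointwise product of two length-$N$ vectors, and one inverse FFT, that is $N\log_2 N + N + N\log_2 N = 2N\log_2 N + N$ flops. I would close by noting the contrast the proposition is really making: the direct cost grows exponentially in the scale index $j$, the decomposition cost only linearly in $j$, and the Fourier cost is independent of both $j$ and $k$ (and, as observed earlier, its two transforms are shared across all scales when the whole MHOTV transform is assembled).
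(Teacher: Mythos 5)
Your proposal is correct and follows essentially the same route as the paper, which dispatches this proposition in a single paragraph by appealing to Theorem \ref{thm1} (two unit entries per row of $P_i$, hence $(k+1)N$ additions per factor $P_i^{k+1}$ plus $kN$ for $\Phi_k$), to the convolution theorem with the $N\log_2 N$ FFT convention, and to the nonzero count per row for the direct product. Your only addition is the honest observation that the direct count $2^jNk$ is not exact to the flop (a naive count gives $\sim 2^j(k+1)N$), a looseness the paper itself glosses over; everything else matches.
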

Proposition 2 is a direct result of Theorem \ref{thm1}, the Fourier convolution theorem combined with the FFT, and the flops required for the direct calculation.  We assume that the FFT and inverse FFT can be computed in $N\log_2 N$ flops, although the exact count is somewhat vague, depending on the precise algorithm and if $N$ is a power of 2.  To compute the full set of operators, we can get away with less flops then adding the flops for each level.  If we use the decomposition approach to calculate the operators as determined by (\ref{decomp2}), the associated computations are limited to that at the highest scale, since the intermediate scales are determined in this calculation as pointed out in (\ref{decomp3}).  If we use the Fourier approach for calculating the coefficients in (\ref{MHOTV-model}), only one foward FFT is required for the function $f$.  Then the product of $F(f)$ and $F(\phi_{k,2^j})$ must be computed for each $j$, as well as the inverse FFT for each of these products. The observations lead to the following corollary.

\begin{corollary}
 Let $T$ be the matrix containing the complete set of $\ell +1$ operators involved in the MHOTV $\ell_1$ regularization norm, so that $T^T= [\Phi_{k,1}^T , \Phi_{k,2}^T , \dots , \Phi_{k,2^\ell}^T].$  Then calculating $T$ using the operator decomposition given in Theorem 1 requires $\ell N(k+1) + Nk$ flops.  Calculating $T$ using the Fourier approach requires a total flop count of $(\ell +2) N\log_2 N+(\ell+1) N$.
\end{corollary}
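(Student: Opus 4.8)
The plan is to tally the flop count of each scheme separately, reusing partial computations across scales and adhering to the conventions already fixed in Proposition~\ref{prop2}: a length-$N$ FFT or inverse FFT costs $N\log_2 N$ flops, a pointwise product of two length-$N$ vectors costs $N$ flops, and applying a circulant whose generating row has $p$ nonzero entries costs $N(p-1)$ flops (the integer multiplications not being charged, consistent with the stated $Nk$ cost of $\Phi_k$, whose row has $k+1$ nonzeros).

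For the decomposition approach I would first observe that $\Phi_{k,2^0}=\Phi_{k,1}=\Phi_k$, so the first block of $Tf$ is $\Phi_k f$, obtained in $Nk$ flops. By the recursion~(\ref{decomp3}), $\Phi_{k,2^j}f = P_j^{k+1}\big(\Phi_{k,2^{j-1}}f\big)$, so each subsequent block is produced from the previous one by a single application of $P_j^{k+1}$; by part~(1) of Theorem~\ref{thm1} every row of $P_j^{k+1}$ has exactly $k+2$ nonzero entries (the binomials $\binom{k+1}{0},\dots,\binom{k+1}{k+1}$), so this costs $N(k+1)$ flops regardless of $j$. Summing over $j=1,\dots,\ell$ and adding the initial $Nk$ gives $\ell N(k+1)+Nk$.

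For the Fourier approach I would treat the filters $F(\phi_{k,2^j})$, $j=0,\dots,\ell$, as precomputed from the closed form in the Proposition (a one-time setup cost not charged per solve, exactly as in Proposition~\ref{prop2}). Evaluating $Tf$ then requires one forward FFT of $f$ ($N\log_2 N$ flops), followed for each $j=0,\dots,\ell$ by one pointwise product $F(f)\cdot F(\phi_{k,2^j})$ ($N$ flops) and one inverse FFT ($N\log_2 N$ flops). That is one forward transform, $\ell+1$ inverse transforms, and $\ell+1$ pointwise products, totalling $(\ell+2)N\log_2 N+(\ell+1)N$ flops.

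I do not expect a genuine obstacle; the statement is an accounting consequence of Theorem~\ref{thm1} and the convolution theorem. The two points needing care are (i) invoking~(\ref{decomp3}) to argue that the intermediate vectors $\Phi_{k,2^{j-1}}f$ are exactly the lower-scale transform outputs and must not be recomputed --- this is what collapses a naive $O(\ell^2)$ tally to the stated $O(\ell)$ count --- and (ii) staying faithful to the conventions of Proposition~\ref{prop2}, in particular not charging for the small-integer multiplications in $P_j^{k+1}$ or for precomputing the Fourier filters, so that the constants emerge exactly as written.
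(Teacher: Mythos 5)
Your proposal is correct and follows essentially the same route as the paper: the paper likewise obtains the decomposition count by noting that the intermediate scales $\Phi_{k,2^{j-1}}f$ are produced along the way via (\ref{decomp3}), so the total cost collapses to that of the highest scale from Proposition \ref{prop2}, and it obtains the Fourier count from one forward FFT plus $\ell+1$ pointwise products and inverse FFTs. Your explicit bookkeeping of the $k+2$ nonzeros per row of $P_j^{k+1}$ and the uncharged setup of the filters matches the paper's implicit conventions.
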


A few concluding remarks are in order.

\begin{remark}
 All of the results presented are for 1-D signals.  For higher dimensions say 2-D, the operators can be applied along each row and column, and the flop count is only doubled, disregarding the likely increased number of indices.
\end{remark}

\begin{remark}
 To solve (\ref{MHOTV-model}), we use the well establised alternating direction method of multipliers (ADMM) \cite{Li2013,bregman,Zhang}.  This approach introduces splitting variables that allows one to split the objective functional into equivalent subproblems that can be solved relatively fast.  Our algorithm can be downloaded at \cite{toby-web}, and some of the simulations in the proceeding section can also be found there.
\end{remark}

\section{Numerical Experiments}\label{sec6}
\subsection{Repeat of 1-D Simulations}\label{sec: repeat1d}
\begin{figure}
 \centering
 \includegraphics[width=.5\textwidth]{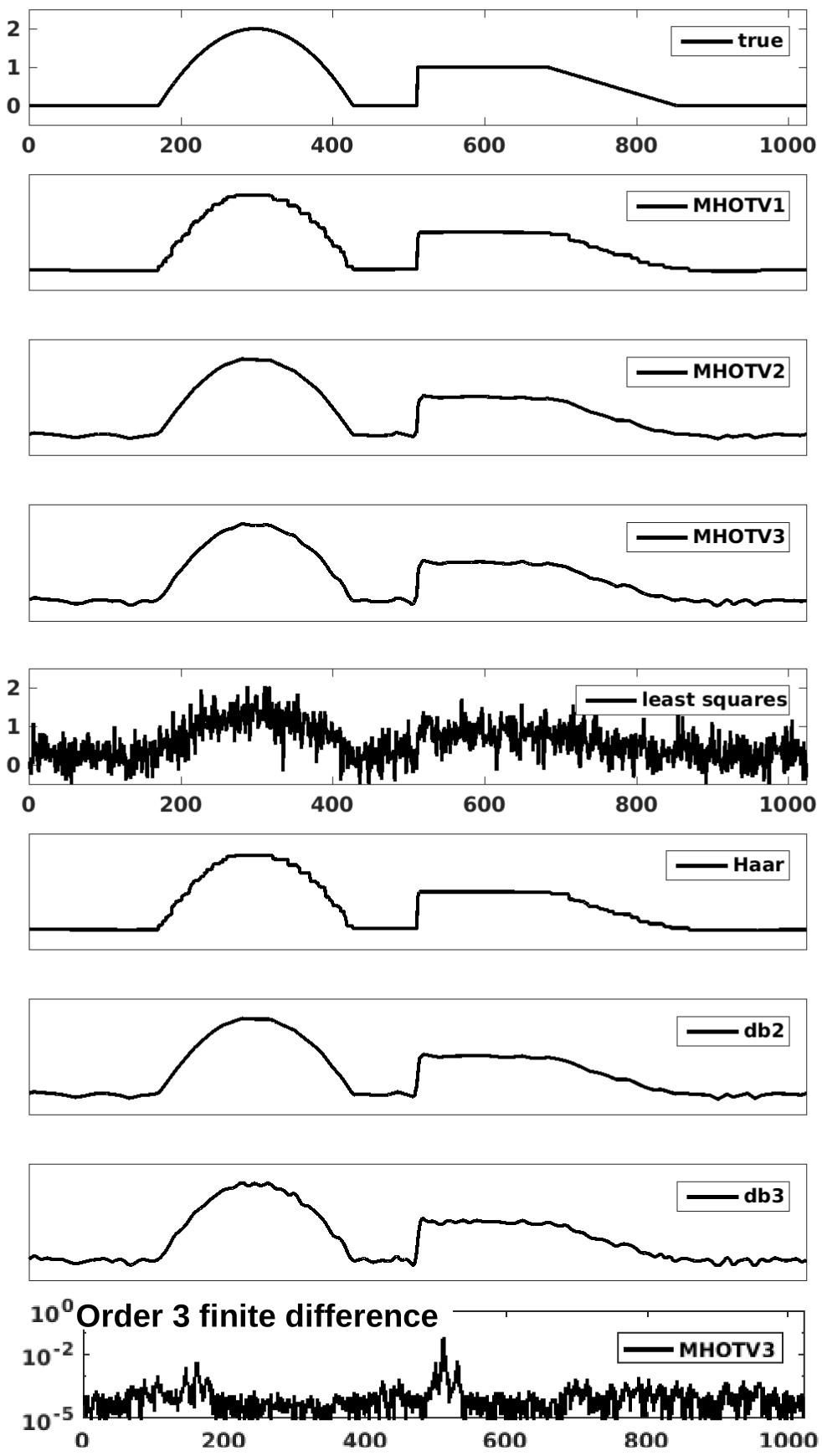}
 \caption{Reconstruction of a piecewise polynomial function of degree two over 1024 stencil from random sampling at 50\%.  Three scales are used for both the Daubechies wavelets and multiscale HOTV.}
 \label{fig2}
\end{figure}
{To compare MHOTV and wavelet regularized reconstructions we repeat the numerical examples presented in Figure \ref{fig1} with the same noisy data used for the HOTV reconstruction. The corresponding reconstruction with MHOTV and wavelets are presented in Figure~\ref{fig2}.} Recall that the measurements were collected at a 50\% sampling rate and corrupted with normally distributed mean zero noise.  For the multiscale HOTV and wavelets, three scaling levels were used.  The selection of the regularization parameter $\lambda$ was set to the same value for each order for HOTV and the wavelets, where we used a similar normalization approach for the wavelets coefficients as presented in (\ref{MHOTV-model}).


{The results in Figure \ref{fig2} were generated with orders 1, 2, and 3.} The order is indicated with the numbers next to the approach in the legends, e.g. we denote the order $k=3$ MHOTV approach with MHOTV3. For a baseline comparison, the least squares solution is shown as well.  Compared with the corresponding $1024$ reconstructions from HOTV in Figure \ref{fig1}, these solutions show clear improvements, particularly with the higher orders.  As we expect, although the MHOTV1 and Haar wavelet coefficients are computed in a different manner, the resulting reconstruction are identical since the models are theoretically equivalent.  They both exhibit the staircasing and noise effects in precisely the same locations.  The higher order approaches also show many similar effects of the noisy features, exhibiting certain oscillatory features with the same general behavior in precisely the same locations.  However, with the higher orders, these approaches are not equivalent and MHOTV provides regulatory information at finer scales due to the minimal support of the transform elements.  The result appears to be a modest improvement in the resulting reconstructions. 

Finally, in the bottom of the figure the third order finite difference of the MHOTV3 solution is plotted in logarithmic scale.  Comparing this with the original HOTV3 finite difference in Figure \ref{fig1}, we observe that the solution exhibits much better sparsity with respect to this transform domain, as desired.

\subsection{2-D Tomographic Simulations}
In this section we investigate the regularization methods on the common 2-D tomographic image reconstruction problem \cite{Natterer2}.  The phantom test image is shown in Figure \ref{fig3} (a).  The data generated for tomography are 1-D line integrals of the image, well-known as Radon transform data.  Formally, the Radon transform of a 2-D function or image $f$ is defined as
\begin{equation}\label{Radon}
 R f (t,\theta) = \int_\Omega f(x,y) \delta(t-(x,y)\cdot(\cos \theta ,\sin \theta)) \, dx \, dy,
\end{equation}
where $\Omega$ is the image domain and $\delta$ is the Dirac delta function.  As in many applications, the data collected for reconstruction are of the form known as parallel beam geometry.  In this setting, the full knowledge of noisy $Rf(t,\theta)$ is known for some finite set of angles, $\theta.$\footnote{There is also a discretization over $t$, but it is small enough to ignore.}  In this numerical experiment, we use a total of 29 angles that are equally distributed across the full 180$\degree$ angular range, which are visualized as a sinogram in Figure \ref{fig3} (b).  Such a limited set of data is sometimes referred to as \emph{limited data} tomography.  Mean zero normally distributed noise was again added to the data values.  Classically tomographic reconstruction from parallel beam geometry can be done by first transforming the data into Fourier space by the Fourier slice theorem, and then applying a chosen ramp filter to this data and the inverse Fourier transform.  This direct approach, called filtered backprojection, is sensitive to noise and is shown in Figure \ref{fig3} (c).

The problem can however be discretized and approximated by a set of linear equations $Af = b$ (see for instance \cite{sanders2015image} on pages 8-9 within section 1.5.), where $A$ is sparse matrix that is a discretized approximation of the Radon transform, $f$ is the vectorized image, and $b$ is a vector holding the data values.  With this set up we can apply regularization models such as (\ref{gen-l1}) and (\ref{MHOTV-model}).  We use a $512\times 512$ pixelated mesh for the image domain in this experiment.  The results for applying these models with HOTV, MHOTV, and Daubechies wavelets all at orders one and three are shown in Figure \ref{fig4}.  Each of the models are also supplemented with a nonnegativity constraint, $f\ge 0$, which is carried out with a projected gradient method.  A baseline comparison obtained by a conjugate gradient least squares solver is also shown in the figure.  To ensure accurate comparison between the methods by appropriate parameter selection and algorithmic convergence, the relative data errors defined by $\| Af - b \|_2 / \| b\|_2$ are shown in the figure, and it confirms that each approach approximately fit the data equally well, with all of the errors contained within an interval of size 0.0129.

As has been observed previously \cite{SGP-ET}, due to a number of reasons including undersampling, noise, fine details between the image features, and nature of the regularization, the order 1 solutions (TV) can leave the fine features under resolved, even though the underlying image is truly a piecewise constant that classical TV was originally designed to recover.  Each of these order 1 images appear relatively similar, with the MHOTV and Daubechies approaches showing modest improvements in resolving some features.  As in the 1-D case, the HOTV3 solution exhibits some small local oscillations that appear as noise in the image.  However, this image, as well as the other order 3 approaches resolve the features notably more clear than the order 1 approaches.  Both of the order 3 multiscale approaches appear less noisy than the HOTV order 3, while still maintaining a good approximation of the image features.

\begin{figure}
 \centering
 \includegraphics[width=0.5\textwidth]{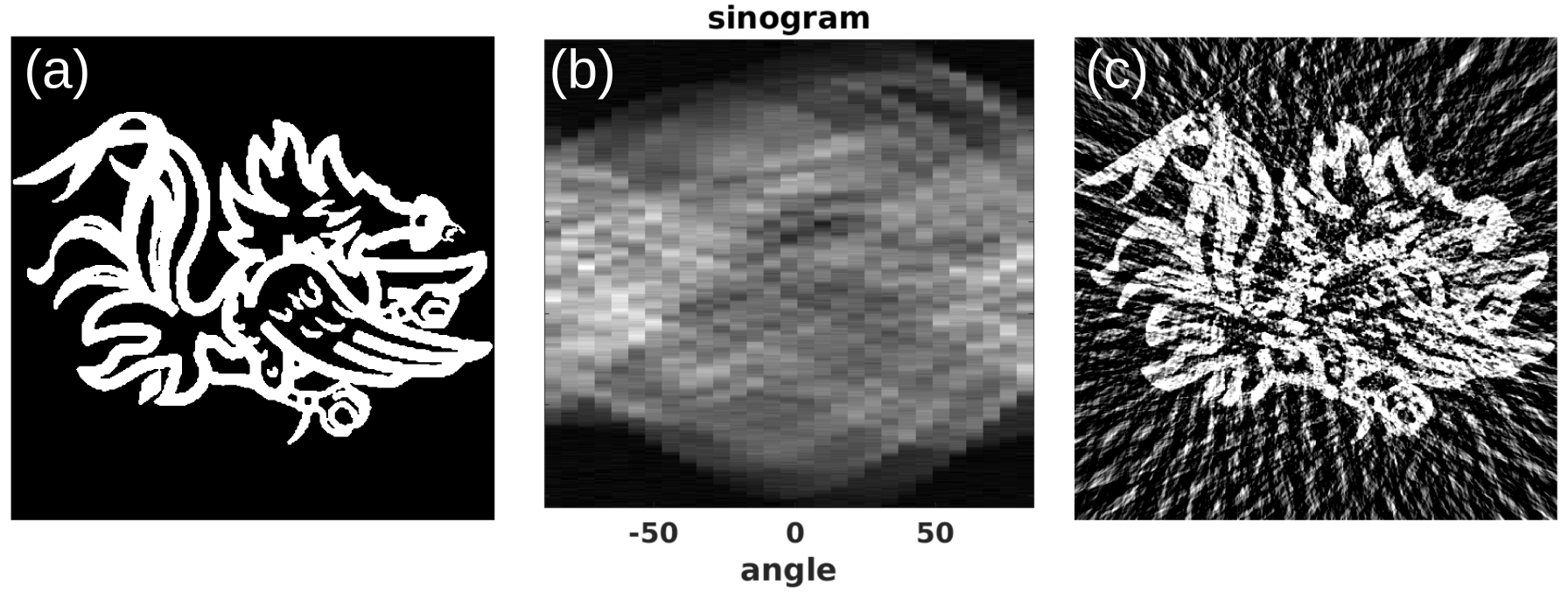}
 \caption{(a) Phantom image.  (b) Noisy tomographic data in sinogram format, 29 projections in total.  (c) Classical filtered backprojection reconstruction from data.}
 \label{fig3}
\end{figure}

\begin{figure*}
 \centering
 \includegraphics[scale=.8]{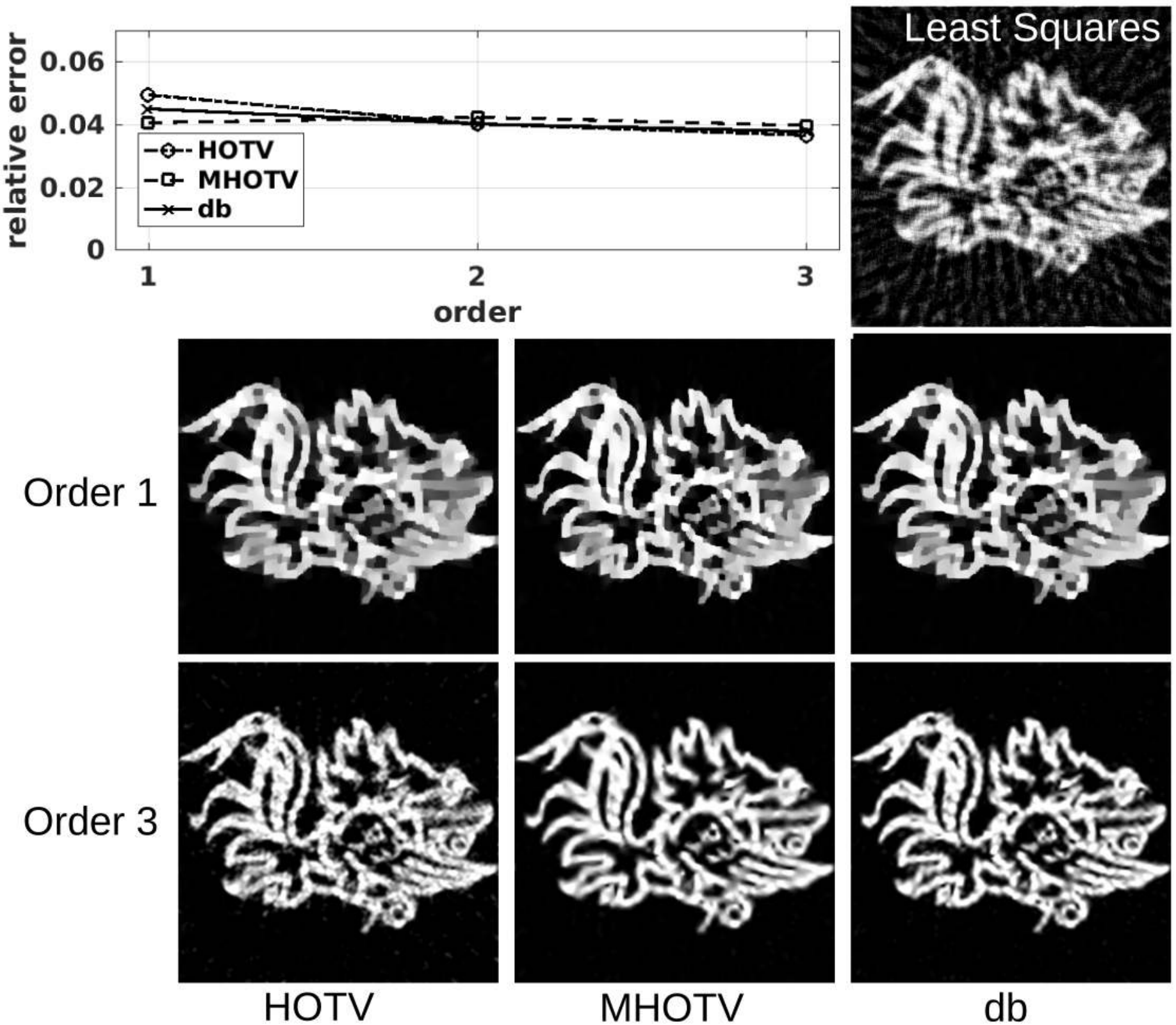}
 \caption{Reconstructions of phantom image from 29 tomographic projections.  Orders 1 and 3 are shown for the regularization approaches.  Top left: relative data fitting error from each approach shows approximately equivalent data fitting from each approach.  Top right: least squares solution for baseline comparison.}
 \label{fig4}
\end{figure*}

\begin{table*}
\begin{center}
\begin{tabular}{r | c | c | c | c | c | c |}
            SNR   & & Order 1 & Order 2 & Order 3 & Order 1.5 & Order 2.5 \\ \hline
      &          &mhotv \vline\,Daub&mhotv \vline \,Daub&mhotv \vline \,Daub&mhotv& mhotv\\ 
             \hline
      & 1 level  &  .1624, .1624  & .2039, .1961   & .2464, .2306   &  .1819   & .2328 \\
 2    & 2 levels &  .1612, .1617 & .1742, .1852   & .2135, .2223   &  .1782    & .2183 \\
      & 3 levels &  .1699, .1615  & \textbf{.1513}, .1778   & .1745, .2149   &  .1776    & .1975 \\
      & 4 levels &  .2001, .1647 & .1584, .1745   & .1764, .2104   &  .2031    & .2102 \\
      \hline
      & 1 level  &.0864, .0864& .0971, .0914 & .1293, .1090 & .1025&.1287 \\
 5    & 2 levels &.0858, .0857& .0761, .0838 & .0946, .1004 & .0987&.1172 \\
      & 3 levels &.0926, .0864& \textbf{.0668}, .0805 & .0766, .0982 & .1016&.1133 \\
      & 4 levels &.1100, .0894& .0742, .0801 & .0828, .0981 & .1186&.1276 \\
 \hline
       & 1 level  &.0543, .0542& .0509, .0480 & .0694, .0572 & .0690& .0841 \\
 10    & 2 levels &.0542, .0539& .0400, .0442 & .0489, .0528 & .0657& .0763 \\
       & 3 levels &.0589, .0547& \textbf{.0359}, .0430 & .0399, .0522 & .0694& .0776 \\
       & 4 levels &.0696, .0570& .0413, .0436 & .0442, .0535 & .0802& .0880 \\

       \hline
\end{tabular}
\end{center}
\caption{Average relative reconstruction error over 100 simulations, as a function of the order of the method and number of levels in the multiscale approaches.  The minimums for each SNR are emphasized in bold.}
\label{table1}
\end{table*}

\subsection{Quantitative Results}

We performed two sets of simulations to compare the methods in a more quantitative manner.  The first set of results presented here involved setting up 100 different test problems and then running all of our methods over each time for multiple noise levels, and the mean reconstruction error over all simulations is presented in Table \ref{table1}, with the MHOTV resulting in the left of each column and Daubechies wavelets in the right of each column.  It is important to note here, that the parameter $\lambda$ in \ref{MHOTV-model} was optimized in every reconstruction to yield the solution that minimized the true error between the test signal and the reconstruction, making for objective comparisons.  In order to set up each test problem, a 1D piecewise quadratic polynomial (presumably ideal for order 3) was randomly generated over a 1024 stencil, and the entries in sampling matrix $A \in \R^{1024\times 1024}$ and added noise to $b$ were randomly generated from a mean zero Gaussian distribution.  Overall, these results show that MHOTV moderately outperforms Daubechies wavelets in each case, and remaining comparisons between the order and number of levels are generally consistent between MHOTV and the wavelets.

For the single level case (original TV and HOTV), the error generally increases for higher orders, contrary to the results in previous work \cite{Archibald2015}.  Multiple scales show notable improvement for the higher orders, whereas they show a mild reduction in accuracy for order 1.  The most benefit for both orders 2 and 3 is seen when using 3 levels, and order 2 actually outperforms order 3.  Finally, using the fact that (\ref{eq: DFTeq}) gives us a way to compute fractional orders of the method, we present also the results from orders 1.5 and 2.5.  These are notably worse than the integer orders, a testament to the fact that these fractional order derivates result in highly nonlocal differences \footnote{To observe these nonlocal stencils, one can compute the inverse Fourier transform of (\ref{eq: DFTeq}) for fractional orders $k$.}.

In the second set of results presented here we ran a series of numerical simulations and measured the rate of {successful} recovery for each method as a function of the sampling rate. For each simulation we randomly generated a piecewise polynomial of specified maximal degree over a 1024 stencil.  This function was randomly sampled at the specified sampling rate precisely as in the previous 1-D simulations in section \ref{sec: repeat1d}, where the sampling rate is defined by the number of samples divided by the number of grid points.  Each regularization {procedure} was then used for reconstruction, and the $\ell_2$ error between the true function and reconstructed functions is determined.  If the error was less than $10^{-2}$, then the reconstruction was said to yield a \emph{successful recovery.}  This simulation was carried out for each sampling rate in 20 trials, and the fraction of those 20 trials that yielded {successful} recovery is set as our probability of success.  {In each case, the generated test functions had five discontinuities, and the location of the jumps were drawn randomly from a uniform distribution on the approximation interval.}

No noise was added for these simulations, as this can make the likelihood of an exact recovery unlikely.  Therefore, for this case our general $\ell_1$ model as a modification of (\ref{gen-l1}) is given by
\begin{equation}\label{exact-model}
f_{rec}  = \arg \min_f \| T f \|_1 \quad \text{s.t.} \quad Af = b,
\end{equation}
and similarly for our specific MHOTV model in (\ref{MHOTV-model}).
This constrained data fitting problem is solved by reformulating as an unconstrained problem with an augmented Lagrangian function \cite{hestenes1969multiplier, Li2013}.

The results for these simulations are shown in Figure \ref{fig5}.  The results are separated in two ways, by the degree of the piecewise polynomial function that is sampled (varying along the rows) and the order of the regularization method (varying along the columns).  In the first row are results for piecewise constant functions, in the second row are piecewise linear functions, and in the third row are piecewise quadratic functions.  In all cases, the MHOTV yields the highest probability of success, regardless of the degree of the polynomial or order of the regularization, and the Daubechies wavelets success appears to generally lie somewhere between MHOTV and HOTV.  The order 1 regularizations perform well only in the case of piecewise constant functions.  On the other hand, the order 2 and 3 regularizations perform well for all function types, with order 2 again outperforming order 3 both with piecewise linear and quadratic signals.

\begin{figure*}
 \centering
 \includegraphics[width =\textwidth]{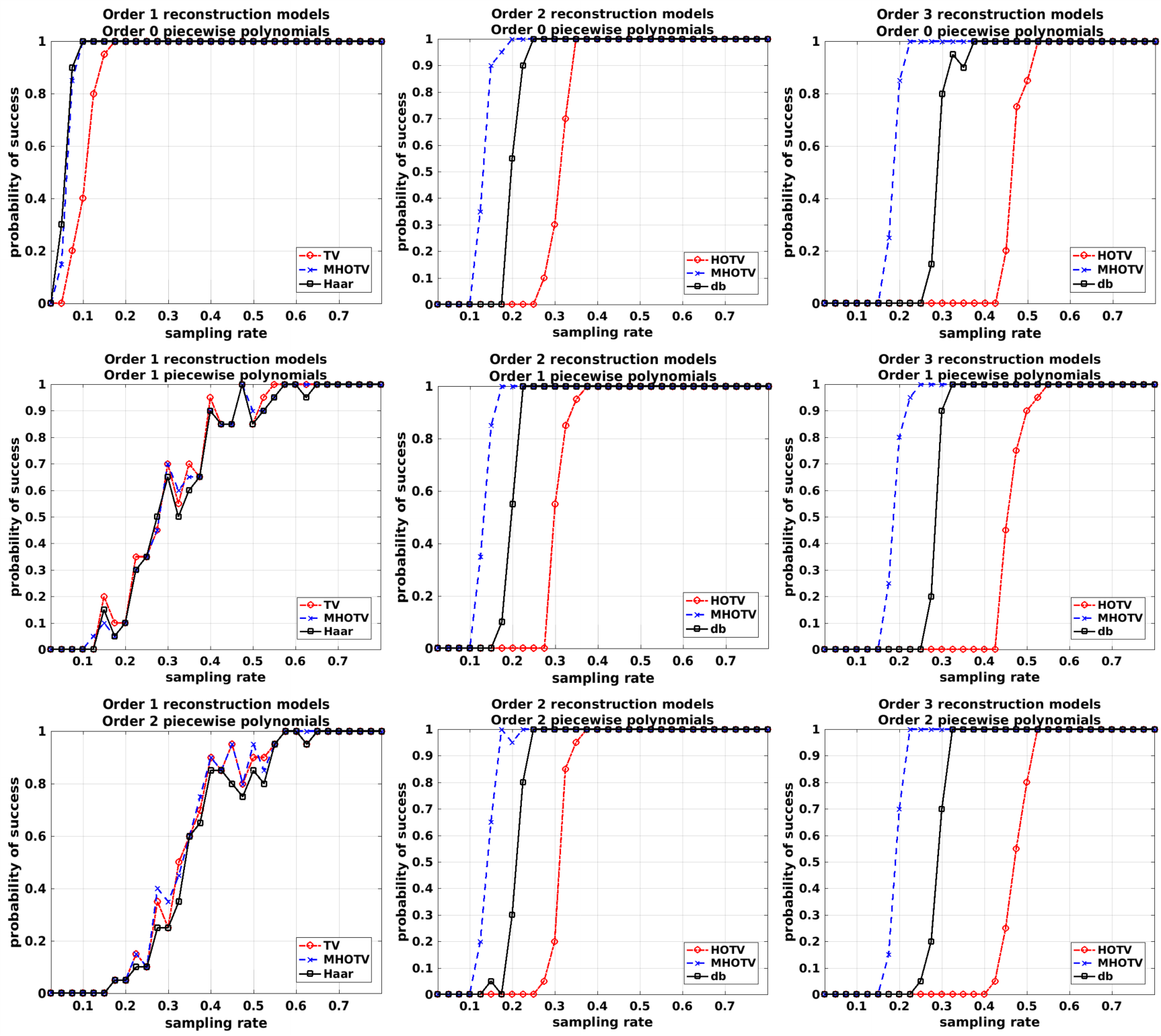}
 \caption{Probability of success for HOTV, MHOTV, and Daubechies wavelets at orders 1 (left column), 2 (middle column) and 3 (right column).  A successful recovery is deemed whenever the relative $\ell_2$ error between the reconstruction and the true signal is less than $10^{-2}$. Top row: piecewise constant functions.  Middle row: piecewise linear functions.  Bottom row: piecewise quadratic functions.}
 \label{fig5}
\end{figure*}

\section{Summary}

HOTV circumvents the staircasing often observed in TV solutions and has been shown to be more effective for problems with fine features, where resolution can be improved by increasing the order of derivatives in the regularization term \cite{SGP-ET}. In some applications, however, high order derivatives promote solutions with spurious local oscillations, as shown in Figure~\ref{fig1}. The MHOTV regularization we introduce in this work is shown to mitigate unwanted oscillations while maintaining the resolution power of high order regularization. 

{Although the theory for MHOTV reconstructions remains underdeveloped when compared to wavelets regularization \cite{eck1995multiresolution,tenoudji2016wavelets,guo2007optimally,kutyniok2012shearlets,starck2010sparse,gao1998wavelet,taswell2000and}, our experiments indicate that MHOTV can outperform wavelets regularization in practical applications.  Figure~\ref{fig2}, for instance, shows fewer spurious oscillations in the MHOTV reconstruction than for  Daubechies wavelets penalization. A feature that can also be observed for the 2-D tomographic data. Moreover, our results  show that MHOTV regularization requires fewer samples for successful reconstructions than for HOTV and wavelets. Computational efficiency is achieved by performing the transformation in Fourier space or by matrix decomposition, as derived in section \ref{sec5}. The associated matlab algorithms can be downloaded at \cite{toby-web}, and some of the simulations in the proceeding section can also be found there.}

\appendix
\section{Proof of Theorem \ref{thm1}}
\begin{lemma}\label{lem1}
Let $k,\ell \in \Z$ with $0\le \ell \le k$.  Then we have the following Vandermonde-like identity:
\begin{equation}\label{Vandermonde}
(-1)^p {k\choose p} = \sum_{j=0}^\ell (-1)^j {k\choose j} {k+1 \choose \ell - j},
\end{equation}
where $p = \ell/2$ for $\ell$ even and $p = (\ell - 1)/2$ for $\ell$ odd.
\end{lemma}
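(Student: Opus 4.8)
The plan is to interpret both sides as coefficients in a single polynomial identity. First I would observe that the right-hand side of (\ref{Vandermonde}) is exactly the Cauchy product formula for the coefficient of $x^\ell$ in the product $(1-x)^k(1+x)^{k+1}$: expanding $(1-x)^k=\sum_j (-1)^j\binom{k}{j}x^j$ and $(1+x)^{k+1}=\sum_i \binom{k+1}{i}x^i$ and collecting the $x^\ell$ term gives precisely $\sum_{j=0}^\ell (-1)^j\binom{k}{j}\binom{k+1}{\ell-j}$ (the upper limit $\ell$ is harmless since terms with $\ell-j<0$ do not occur).

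Next I would simplify the generating function by factoring one copy of $(1+x)$ out of $(1+x)^{k+1}$ and pairing the rest with $(1-x)^k$:
$$
(1-x)^k(1+x)^{k+1} = \bigl[(1-x)(1+x)\bigr]^k (1+x) = (1-x^2)^k(1+x).
$$
Expanding $(1-x^2)^k=\sum_{m=0}^k (-1)^m\binom{k}{m}x^{2m}$, the product becomes
$$
(1-x^2)^k(1+x) = \sum_{m=0}^k (-1)^m\binom{k}{m}x^{2m} + \sum_{m=0}^k (-1)^m\binom{k}{m}x^{2m+1}.
$$
The only terms contributing to $x^\ell$ are the $m$ for which $2m=\ell$ (from the first sum) or $2m+1=\ell$ (from the second). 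If $\ell$ is even, only the first sum contributes, with $m=\ell/2=p$, giving coefficient $(-1)^p\binom{k}{p}$; if $\ell$ is odd, only the second contributes, with $m=(\ell-1)/2=p$, again giving $(-1)^p\binom{k}{p}$. This is exactly the left-hand side of (\ref{Vandermonde}), and equating the two computations of the $x^\ell$-coefficient completes the proof.

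There is no real obstacle here beyond bookkeeping: the constraint $0\le \ell\le k$ in the hypothesis guarantees that the index $p$ lies in $\{0,\dots,k\}$ so that $\binom{k}{p}$ is the ordinary binomial coefficient and the coefficient extraction is within the natural range of both expansions. The only care needed is to correctly handle the parity split when reading off the coefficient of $x^\ell$ from $(1-x^2)^k(1+x)$.
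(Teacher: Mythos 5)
Your proof is correct and follows essentially the same route as the paper: both compare the coefficient of $x^\ell$ in the two expansions of $(1-x)^k(1+x)^{k+1} = (1-x^2)^k(1+x)$. In fact you spell out the parity bookkeeping for reading off the $x^\ell$-coefficient more explicitly than the paper does.
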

\begin{proof}[Proof of lemma \ref{lem1}]
Consider the polynomial $p(x) = (1-x^2)^k (1+x) $, which can be factored as $p(x) = (1-x)^k (1+x)^{k+1}$.  Both representations can be expanded using the binomial sum giving
\begin{equation}\label{vande1}
p(x) = \sum_{j=0}^k (-x^2)^j {k \choose j} (1+x) = \sum_{j=0}^k (-1)^j {k\choose j}\left[x^{2j} + x^{2j+1} \right]
\end{equation}
by the first representation and 
\begin{equation}\label{vande2}
p(x) = \left[\sum_{j=0}^k (-x)^j {k\choose j} \right] \left[ \sum_{j=0}^k x^j {k+1 \choose j} \right]
\end{equation}
by the second representation.  Since (\ref{vande1}) and (\ref{vande2}) are equivalent for all $x$, the coefficients of any particular power of $x$ are equivalent, which is the equality we set out to prove.
\end{proof}

\begin{proof}[Proof of theorem \ref{thm1}]
Statement 3 is an immediate consequence of statement 2, since each matrix involved in the product is a convolution operator, and convolution operations are commutative and associative.  

To prove statement 1, first observe that with increasing $m$, the nonzero entries in the rows of $P_m$ become increasingly spaced, and it easy to see that the general resulting product $P_m^{k+1}$ is essentially the same for each $m$ with different spacings between the nonzero entries.  Thus it is enough to show statement 1 for $m=1$.  In the case $k=1$, this calculation can be checked directly.  So suppose 1 holds for some arbitrary $k$.  Then we need to show that $(P_1 P_1^{k+1})$ yields the desired result as defined by (\ref{pvalues}).  It is fairly easy to see that the resulting entries of this product is simply the addition of two neighboring entries (modulo $N$) in $P_1^{k+1}$.  Any such entries added together trivially yields the desired values, and the proper location of these values is also easy to confirm.

Similar arguments used for statement 1 also apply to statement 2.  First, we can consider an inductive approach, over $m$, where we will need to show $\Phi_{k,2^{m+1}} = P_{m+1}^{k+1} \Phi_{k,2^{m}}$.  Note that again due to the spacing of the entries of $P_m^{k+1}$, the argument for any arbitrary $m$ is parallel to that for $m=1$, with just different handling of the indices.  Therefore the case for $m=1$ suffices for the inductive step, and the case for $m=1$ is an immediate consequence of the previous lemma.
\end{proof}

\section{Definitions}
If $f,g\in \R^N$, then the convolution of $f$ and $g$ is given by
\begin{equation}\label{defconv}
 (f*g)_m = \sum_{n=0}^{N-1} f_n\, g_{m-n}, \quad \text{for} ~ m = 0,1,\dots,N-1,
\end{equation}
where for indices of $g$ running outside of domain of $g$, a periodic extension of $g$ is assumed.  The discrete Fourier transform (DFT) of $f$ is defined by
\begin{equation}\label{defDFT}
 \F (f)_\xi = \sum_{n=0}^{N-1} f_n e^{\frac{-i2\pi}{N}n\xi} \quad \text{for}~ \xi = 0,1,\dots,N-1,
\end{equation}
and the inverse discrete Fourier transform (IDFT) of $f$ is given by \
\begin{equation}\label{defIDFT}
 \F^{-1} (f)_n = \frac{1}{N} \sum_{\xi=0}^{N-1} f_\xi e^{\frac{i2\pi}{N}\xi n} \quad \text{for}~ n = 0,1,\dots,N-1.
\end{equation}


\section*{Acknowledgements}
This work is supported in part by the grants NSF-DMS 1502640, NSF-DMS 1522639 and AFOSR FA9550-15-1-0152.

\end{document}